\documentclass[12pt,reqno,a4paper]{amsart}


\usepackage{amsmath,amsfonts,amsthm,amssymb,amsxtra,enumerate,mathtools,mathabx}
\usepackage[normalem]{ulem}
\usepackage[utf8]{inputenc}
\usepackage[T1]{fontenc}
\usepackage{lmodern}
\usepackage{bbm}
\usepackage{todonotes}
\usepackage{mathrsfs}
\usepackage[many]{tcolorbox}  
\usepackage{enumerate}
\usepackage{csquotes}
\usepackage{cite}
\usepackage{mathdots}
\RequirePackage{doi}
\usepackage{hyperref}

\setlength{\marginparwidth}{0.8in}


\setlength{\voffset}{-.7truein}
\setlength{\textheight}{8.8truein}
\setlength{\textwidth}{6.05truein}
\setlength{\hoffset}{-.7truein}


\newtheorem{theorem}{Theorem}
\newtheorem{proposition}[theorem]{Proposition}
\newtheorem{lemma}[theorem]{Lemma}

\theoremstyle{definition}

\theoremstyle{remark}

\newtheorem{remark}[theorem]{Remark}


\numberwithin{equation}{section}


\newcommand{\dd}{\, \mathrm{d}}

\renewcommand{\epsilon}{\varepsilon}

\newcommand{\N}{\mathbb{N}}

\renewcommand{\phi}{\varphi}
\newcommand{\R}{\mathbb{R}}

\DeclareMathOperator{\Tr}{Tr}

\DeclareMathOperator{\sech}{sech}

\begin{document}
	
	\title[Lieb-Thirring type bounds for perturbed Schr\"odinger operators 
	]{Lieb-Thirring type bounds for perturbed Schr\"odinger operators with single-well potentials}
	
	\author{Larry Read} 
	\address{Larry Read, Department of Mathematics, Imperial College London, London SW7 2AZ, UK}
	\email{l.read19@imperial.ac.uk}
	
	\subjclass[2010]{Primary: 35P15; Secondary: 81Q10}
	
	\begin{abstract}
		We prove an upper bound on the sum of the distances between the eigenvalues of a perturbed Schr\"odinger operator $H_0-V$ and the lowest eigenvalue of $H_0$. Our results hold for operators $H_0=-\Delta-V_0$ in one dimension with single-well potentials. We rely on a variation of the well-known commutation method. In the P\"oschl-Teller and Coulomb cases we are able to use the explicit factorisations to establish improved bounds.
	\end{abstract}

	\maketitle

	\section{Introduction}
		Consider the self-adjoint Schr\"odinger operator $H_V= H_0-V$, with $H_0=-\Delta$, on $L^2(\R^d)$. When $V$ has suitable decay then $H_V$ has a negative spectrum which is discrete and consists of finite or infinitely many eigenvalues $\{E_k(H_V)\}_{k=1}$. The celebrated Lieb-Thirring inequality \cite{lieb1976inequalities} states that 
		\begin{equation} \label{eqn:liebthir}
			\sum_{k} |E_k(H_V)|^\gamma\leq L_{\gamma,d}\int_{\R^d}V(x)_{+}^{\gamma+\frac{d}{2}}\dd x
		\end{equation}
		with $\gamma\geq 1/2$ if $d=1$, $\gamma>0$ if $d=2$ and $\gamma\geq 0$ for $d\geq 3$, where $V_{+}=(|V|+V)/2$. The endpoint cases for $d=1$ and $d\geq 3$ were settled in \cite{weidl1996lieb} and \cite{cwikel1977weak,lieb1976bounds,rozenbljum1972distribution} respectively.
		
		Substantial effort has been made to determine the sharp constants in (\ref{eqn:liebthir}) (refer to \cite{schimmer2022state} or \cite{frank2020lieb} for a review). Considerable progress was made by A.Laptev and T.Weidl in \cite{laptev2000sharp} in which they established that $L_{\gamma,d}=L_{\gamma,d}^{\mathrm{cl}}$ for all $\gamma\geq 3/2$ and $d\geq 1$, where
		\begin{equation*}
		    L_{\gamma,d}^{\mathrm{cl}}=\frac{\Gamma(\gamma+1)}{(4\pi)^{d/2}\Gamma(\gamma+1+\frac{d}{2})}
		\end{equation*}
		is the semi-classical constant. Their method relied on a proof of this fact with $\gamma=3/2,\ d=1$ for operator-valued potentials, which they found by generalising a trace formula of V.Buslaev, L.Faddeev and V.Zaharov. An application of this, using separation of variables, settled the scalar result for $d\geq 1$, after which it followed for all $\gamma\geq 3/2$ by a standard lifting argument of M.Aizenman and E.Lieb \cite{aizenman1978semi}.
		
		Shortly after, a simple proof of their result was found by R.Benguria and M.Loss in \cite{benguria1999simple} where the authors used the commutation method to establish the sharp bound for $\gamma=3/2,\ d=1$. This scheme, which we detail below, involves factorising Schr\"odinger operators by so-called \enquote*{creation and annihilation operators.} It has been known to authors as far back as C.Jacobi \cite{Jacobi1837}. Though its first known application to trace inequalities was by U-W.Schmincke in \cite{schmincke1978schrodinger} where the author derived the lower bound
		\begin{equation*}
			\sum_k \sqrt{|E_k(H_V)|} \geq \frac{1}{4}\int_{\R^d} V(x) \dd x.
		\end{equation*}
		Since its elegant use by R.Benguria and M.Loss to obtain sharp Lieb-Thirring inequalities it has been employed to the Robin boundary case on the half-line by P.Exner, A.Laptev and M.Usman in \cite{exner2014some} to obtain an analogous result. A bound which was improved by L.Schimmer in \cite{schimmer2019improved} using a variation of this idea, known as the double commutation method.
		
		Recently, A.Laptev in \cite{laptev2021factorisation} found that the factorisation scheme could be used to derive the Hardy-Lieb-Thirring bound on the half-line, with a conjectured sharp constant. This inequality is the improvement of \eqref{eqn:liebthir} found by T.Ekholm and R.Frank in \cite{ekholm2008lieb} which, for the Schr\"odinger operator in $L^2(\R_+)$ with Dirichlet boundary conditions, states that
	    \begin{equation*}\label{eqn:Hardyliebthirring}
	        \Tr\left(-\frac{\mathrm{d}^2}{\mathrm{d}r^2}-\frac{1}{4r^2}-V(r)\right)_{-}^\gamma\leq L_{\gamma,d}^{\mathrm{HLT}}\int_{\R_+}V(r)_{+}^{\gamma+\frac{1}{2}}\dd r
	    \end{equation*}
	    for $\gamma\geq 1/2$. Where an analogous result holds on the whole space for $d\geq 3$ \cite{ekholm2006lieb}. In this paper we aim to generalise this application of the commutation method. 
		
	    It is natural to consider whether other improvements of \eqref{eqn:liebthir} exist for perturbed operators $H_V=H_0-V$ where $H_0=-\Delta-V_0$. An extensive solution was given by R.Frank, B.Simon and T.Weidl in \cite{frank2007eigenvalue}. They found that when $H_0$ is such that $\inf \mathrm{spec}(H_0)=0$ and has a \enquote*{regular ground state} then there exists $\beta=\beta(V_0)>0$ for which
		\begin{equation}\label{eqn:franksimonweidl}
			\sum_k |E_k(H_V)|^\gamma\leq \beta(V_0)^{\gamma+\frac{d}{2}}L_{\gamma,d} \int_{\R^d}V(x)_{+}^{\gamma+\frac{d}{2}}\dd x   
		\end{equation}
		with $\gamma,d$ and $L_{\gamma,d}$ as in \eqref{eqn:liebthir}. In fact, their result is a much stronger comparison of individual eigenvalues. The regular ground state condition requires the existence of a strictly positive bounded solution $u_0$ to
		\begin{equation*}
		    (-\Delta-V_0)u_0=0,
		\end{equation*}
		in which case $\beta$ in \eqref{eqn:franksimonweidl} is given as $\beta(V_0)=(\sup u_0(x)/\inf u_0(x))^2$. As noted in \cite{frank2007eigenvalue} this property is satisfied for a large class of $V_0$, including periodic potentials. 
		
		In dimensions one and two improvements of the sort \eqref{eqn:franksimonweidl}, which are homogeneous in $V$, are impossible. For any positive and non-trivial $V_0$, $H_0$ always generates a negative eigenvalue. Nonetheless, one can shift the operator $H_0$ by the ground energy to obtain $\inf\mathrm{spec}(H_0+|E_1(H_0)|)=0$ and \eqref{eqn:franksimonweidl} becomes
		\begin{equation}\label{eqn:FSWformal}
		    \sum_k\left(|E_k(H_V)|-|E_1(H_0)|\right)_+^\gamma\leq\beta^{\gamma+d/2}L_{\gamma,d}\int_{\R^d}V(x)_+^{\gamma+d/2}\dd x.
		\end{equation}
		However, since $\inf\mathrm{spec}(H_0+|E_1(H_0)|)$ belongs the discrete spectrum, $H_0+|E_1(H_0)|$ does not have a regular ground state with finite $\beta(V_0)$ (see \cite[Theorem C.4.2]{Simon1982}). In what follows, we will generalise the method of A.Laptev in \cite{laptev2021factorisation} to show that a bound of this type exists for a class of potentials with a finite sharp constant.  
		\section{Main results}
		Our first result is an upper bound on the sum of the distance between the eigenvalues of $H_V$ and the first eigenvalue of the Schr\"odinger operator $H_0$, where $V_0$ is a single-well potential. By single-well we mean a function that is non-decreasing up to some point, and non-increasing thereafter. 
		\begin{theorem}\label{thm:firsteigen}
			Let $V_0$ be positive and single-well with $(1+|x|)V_0\in L^1(\R)$ and let $V_{+}\in L^2(\R)$. Then for any $\gamma\geq 3/2$ the following holds
			\begin{equation}  \label{eqn:thm1state} 
			  \sum_k\left(\sqrt{|E_k(H_V)|}-\sqrt{|E_1(H_0)|}\right)_{+}^{2\gamma}\leq L_\gamma \int_{\R} V(x)_{+}^{\gamma+1/2} \dd x.
			\end{equation}
			where $L_{\gamma}$ is independent of $V_0$ and $V$, with $L_{3/2}=L_{3/2,1}^{cl}$.
		\end{theorem}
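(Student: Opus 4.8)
The plan is to pass to the Birman--Schwinger picture relative to $H_0$ and to use the factorisation of $H_0$ at the bottom of its spectrum to compare $H_V$ with the free operator $-\Delta-V$, for which the sharp Lieb--Thirring inequality is available. Throughout write $E_1(H_0)=-\mu_0^2$ and let $\psi_0>0$ be the associated ground state, normalised in $L^2(\R)$.

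First I would record the structural consequences of the single-well hypothesis. Setting $\phi:=\psi_0'/\psi_0$, the eigenvalue equation becomes the Riccati identity $\phi'+\phi^2=\mu_0^2-V_0$, and since $V_0$ is positive one gets $\phi'=-V_0\le 0$ at any point where $\phi=\pm\mu_0$; combined with the asymptotics $\phi(x)\to\mp\mu_0$ as $x\to\pm\infty$, a barrier argument forces $|\phi|\le\mu_0$ throughout. Because $V_0$ is single-well, $\psi_0$ is single-peaked, so integrating $(\log\psi_0^2)'=2\phi$ outward from the peak against the bound $|\phi|\le\mu_0$ and using $\int_\R\psi_0^2=1$ yields the pointwise estimate $\|\psi_0\|_\infty^2\le\mu_0$. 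Both bounds are saturated by the point-interaction ground state $\sqrt{\mu_0}\,\e^{-\mu_0|x|}$, which will serve as the extremal comparison.

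Next I would reformulate the left-hand side through the counting function. Since $V_-$ only raises the eigenvalues of $H_V$ it suffices to take $V\ge 0$, and the layer-cake identity gives
\begin{equation*}
  \sum_k\left(\sqrt{|E_k(H_V)|}-\mu_0\right)_+^{2\gamma}=2\gamma\int_0^\infty t^{2\gamma-1}\,N\!\left(-(\mu_0+t)^2;H_V\right)\dd t,
\end{equation*}
where $N(-\lambda;H_V)$ counts the eigenvalues of $H_V$ below $-\lambda$. By the Birman--Schwinger principle $N(-(\mu_0+t)^2;H_V)$ equals the number of eigenvalues exceeding $1$ of $V^{1/2}(H_0+(\mu_0+t)^2)^{-1}V^{1/2}$. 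The crux is then to dominate the Green's function of $H_0$ below its spectrum by the free one, namely
\begin{equation*}
  0\le G_{H_0}\!\left(x,y;-(\mu_0+t)^2\right)\le\frac{1}{2t}\,\e^{-t|x-y|}=G_{-\Delta}\!\left(x,y;-t^2\right),\qquad t>0.
\end{equation*}
Writing $G_{H_0}$ through the positive principal solutions of $(H_0+(\mu_0+t)^2)u=0$ and comparing their logarithmic derivatives with the free values $\mp t$ via the Riccati equation, the single-well bounds $|\phi|\le\mu_0$ and $\|\psi_0\|_\infty^2\le\mu_0$ reduce the comparison to that for the point interaction, where one checks equality on the diagonal and strict domination off it. This is the step I expect to be the main obstacle, since pointwise kernel domination only controls the top Birman--Schwinger eigenvalue---equivalently the lowest eigenvalue of $H_V$---so the comparison must be fed into the commutation scheme rather than applied to the full counting function at once.

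Finally I would run the commutation method in the spirit of Laptev: peel off the lowest eigenvalue of $H_V$ by factorising $H_V$ at that eigenvalue, commute to remove it and pass to a modified operator, and iterate. The comparison above controls each peeled eigenvalue, and the associated sum rule is cubic in the spectral gap---matching $\int V_+^2$---so the procedure yields the sharp endpoint bound $\sum_k(\sqrt{|E_k(H_V)|}-\mu_0)_+^3\le\tfrac{3}{16}\int_\R V_+^2$, that is $L_{3/2}=L_{3/2,1}^{\mathrm{cl}}$. The remaining cases $\gamma>3/2$ then follow from the Aizenman--Lieb lifting argument, which propagates only upward from the endpoint and thereby accounts both for the restriction $\gamma\ge 3/2$ and for the fact that sharpness is asserted only at $\gamma=3/2$.
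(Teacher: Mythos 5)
Your overall architecture (endpoint case $\gamma=3/2$ plus Aizenman--Lieb lifting) matches the paper, but the core of your argument has two genuine gaps, one of which you partially flag yourself. First, the Birman--Schwinger/layer-cake reduction does not close: even granting the kernel domination $0\le G_{H_0}(x,y;-(\mu_0+t)^2)\le G_{-\Delta}(x,y;-t^2)$ (which you assert for general single-well $V_0$ but only verify for the point interaction), pointwise domination of positive kernels controls only the \emph{largest} Birman--Schwinger eigenvalue, hence only $E_1(H_V)$; it does not give the counting-function inequality $N(-(\mu_0+t)^2;H_V)\le N(-t^2;-\Delta-V)$ that your layer-cake identity requires. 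You acknowledge this and defer to ``feeding the comparison into the commutation scheme,'' but that deferral is exactly where the work lies, and it is not carried out. After one commutation step the background potential becomes $V_0^{(2)}=V_0+2g_1'$, which is in general no longer single-well, so none of your structural bounds ($|\phi|\le\mu_0$, $\|\psi_0\|_\infty^2\le\mu_0$, or any Green's function comparison) persist to the peeled operators, and the claim that ``the comparison above controls each peeled eigenvalue'' is unsupported.

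Second, the structural input you extract from the single-well hypothesis is too weak. You prove $|\phi|\le\mu_0$ for $\phi=\psi_0'/\psi_0$, but what the argument actually needs is \emph{log-concavity} of the ground state, i.e.\ $\phi'\le 0$ a.e., which is strictly stronger. In the trace-formula approach the error term is $\tfrac34\int \phi' f_1^2$, and only the sign of $\phi'$ (not a bound on $|\phi|$) allows one to discard it; the same sign condition is what yields the operator inequality $H_V^{(2)}\ge -\tfrac{\mathrm{d}^2}{\mathrm{d}x^2}-V_0-V^{(2)}$, which is the device the paper uses to restore the \emph{original} $V_0$ after each commutation and thereby iterate using only log-concavity of the ground state of the original $H_0$ (its Lemma \ref{lemsingle}, proved by a Riccati/single-well contradiction argument). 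Without establishing $\phi'\le 0$ and without this restoration step, your iteration has no mechanism to control the higher eigenvalues, so the proof as proposed does not go through.
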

		For $\gamma=3/2$ the inequality is optimal in the sense that as $V_0$ is taken to zero what remains is precisely the sharp Lieb-Thirring inequality. Though the bound differs to that deduced formally from \cite{frank2007eigenvalue}, \eqref{eqn:FSWformal}, we have explicit constants which are independent of $V_0$. 
		
		As a consequence of our proof we find the same result on the half-line for the Schr\"odinger operators $H_{0,\sigma}=-\frac{\mathrm{d}^2}{\mathrm{d}r^2}-V_0$ and $H_{V,\sigma}=H_0-V$ on $L^2(\R_{+})$ with Robin boundary conditions $\phi^\prime(0)-\sigma\phi(0)=0$, $\sigma\in \R$.
		\begin{theorem}\label{thm:Robincase}
		  Let $V_0$ be positive and non-increasing with $V_0\in L^1(\mathbb{R}_+)$ and let $V_{+}\in L^2(\R_{+})$. Then for any $\gamma\geq 3/2$ the following holds 
		  \begin{equation}\label{eqn:thm2}	\sum_k\left(\sqrt{|E_k(H_{V,\sigma})|}-\sqrt{|E_1(H_{0,\sigma})|}\right)_{+}^{2\gamma}\leq 2L_\gamma \int_{\mathbb{R}_+} V(r)_{+}^{\gamma+1/2} \dd r.
			\end{equation}
			where $L_\gamma$ is the same as in \eqref{eqn:thm1state}.
		\end{theorem}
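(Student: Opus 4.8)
The plan is to rerun the commutation-method argument behind Theorem~\ref{thm:firsteigen} on the half-line, folding the Robin parameter $\sigma$ into the factorisation so that it disappears. Write $E_1(H_{0,\sigma})=-\kappa^2$ with $\kappa>0$, and let $u_0>0$ be the corresponding ground state, solving $-u_0''-V_0u_0=-\kappa^2 u_0$ on $\R_+$ with $u_0'(0)-\sigma u_0(0)=0$. Since $V_0$ is positive, non-increasing and integrable, I would first record that $u_0$ exists, is strictly positive, and decays like $\e^{-\kappa r}$ --- the only structural facts needed. As on the whole line, factorise $H_{0,\sigma}+\kappa^2=A^\ast A$ with $A=\frac{d}{dr}+u_0'/u_0$ and $A u_0=0$.

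The key step is the ground-state substitution $f=u_0 g$. By the Jacobi identity it turns the perturbed quadratic form into the weighted form
\[
\sclp{f,(H_{V,\sigma}+\kappa^2)f}=\int_{\R_+}\big(|g'|^2-V\,|g|^2\big)\,u_0^2\dd r,
\]
and, crucially, it converts the Robin condition into a plain Neumann condition $g'(0)=0$ \emph{for every} $\sigma$, because $f'(0)-\sigma f(0)=u_0(0)\,g'(0)$. Thus a single substitution removes simultaneously the spectral shift $\kappa^2=|E_1(H_{0,\sigma})|$ and the boundary parameter, leaving a weighted Schr\"odinger form with a Neumann endpoint --- exactly the object handled in the proof of Theorem~\ref{thm:firsteigen}.

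From here I would follow Theorem~\ref{thm:firsteigen}: the Liouville change of variables $s=\int_0^r u_0^{-2}$, which is a well-defined monotone bijection precisely because the non-increasing profile of $V_0$ controls the monotonicity and decay of $u_0$, reduces the weighted form to a standard Schr\"odinger operator $-d^2/ds^2-W$ with a Neumann endpoint and with $\int W_+^{\gamma+1/2}\dd s\le\int_{\R_+}V_+^{\gamma+1/2}\dd r$. The same eigenvalue correspondence established in the proof of Theorem~\ref{thm:firsteigen} gives $\sqrt{|E_k(H_{V,\sigma})|}=\kappa+\mu_k$, where $-\mu_k^2$ are the negative eigenvalues of $-d^2/ds^2-W$, so that $\big(\sqrt{|E_k(H_{V,\sigma})|}-\sqrt{|E_1(H_{0,\sigma})|}\big)_+=\mu_k$. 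Reflecting the Neumann problem evenly across $s=0$ embeds the $\mu_k$ among the eigenvalues of a whole-line Schr\"odinger operator whose potential integrates to $2\int W_+^{\gamma+1/2}\dd s$; the sharp Lieb-Thirring inequality with constant $L_\gamma$ then yields $\sum_k\mu_k^{2\gamma}\le 2L_\gamma\int_{\R_+}V_+^{\gamma+1/2}\dd r$ at $\gamma=3/2$, and the Aizenman-Lieb monotonicity lifts it to all $\gamma\ge 3/2$. The factor $2$ is exactly the cost of this final reflection.

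I expect the main obstacle to be the analytic inputs that the substitution presupposes: that for general $\sigma\in\R$ the Robin ground state is strictly positive and that $f=u_0g$ preserves the form domain together with its boundary term, and that the non-increasing hypothesis on $V_0$ genuinely forces $u_0$ to be monotone with clean $\e^{-\kappa r}$ decay, so that $s(r)$ is a bijection and $W$ falls in the class for which the sharp inequality applies without loss. Once the Robin condition has been traded for the Neumann one, everything downstream is identical to the whole-line argument, and the only genuinely new ingredient is the reflection producing the constant $2L_\gamma$.
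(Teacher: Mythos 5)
Your proposed route is not the paper's, and it contains a step that fails. The substitution $f=u_0g$ does remove the Robin boundary term and the shift $\kappa^2$, but it does \emph{not} produce the operator $-\mathrm{d}^2/\mathrm{d}s^2-W$: after the Liouville change of variables the quadratic form becomes $\int(|h'|^2-W|h|^2)\dd s$ while the underlying norm becomes the \emph{weighted} norm $\int |h|^2u_0^4\dd s$, so the eigenvalues of $H_{V,\sigma}+\kappa^2$ are those of a generalized (weighted) eigenvalue problem. Your claimed correspondence $\sqrt{|E_k(H_{V,\sigma})|}=\kappa+\mu_k$ is false, and no such identity appears in the proof of Theorem \ref{thm:firsteigen} (which contains no Liouville transformation at all). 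Worse, the weight cannot be discarded: since $u_0$ decays like $\e^{-\kappa r}$ one has $\inf u_0=0$, and for trial functions with negative form value the comparison $\int|h|^2u_0^4\dd s\le\int|h|^2\dd s$ pushes the weighted eigenvalues \emph{below} those of $-\mathrm{d}^2/\mathrm{d}s^2-W$, i.e.\ it yields a \emph{lower} bound on $|E_k(H_{V,\sigma})|$, the wrong direction for \eqref{eqn:thm2}. This is precisely the obstruction recorded in the introduction: $H_{0,\sigma}+|E_1(H_{0,\sigma})|$ has no regular ground state with finite $\beta(V_0)$, so the ground-state-representation (Frank--Simon--Weidl) route you are following cannot give a finite constant here.

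The paper's actual proof avoids this by staying with the commutation method. One proves the half-line analogue of Lemma \ref{lemsingle} (log-concavity of the ground state of $H_{0,\sigma}$ for non-increasing $V_0$, so $g_1'\le 0$ and the error term $\mathcal{E}$ can be dropped), and then uses the one-step inequality \eqref{ineqrobin} from Remark \ref{rem:robinrem}. There the constant is $\tfrac{3}{8}=2\cdot\tfrac{3}{16}$ because the boundary terms in the integration by parts of Section \ref{Section:facscheme} contribute only at $r=+\infty$ while $f_1(0)=0$; this, not a reflection, is the source of the factor $2$ in \eqref{eqn:thm2}. The full sum is then obtained by the same variational iteration as in Theorem \ref{thm:firsteigen} — replacing the lifted background $V_0^{(2)}=V_0+2g_1'\le V_0$ by $V_0$ and additionally using $H_{V,\sigma}\le H_{V,\infty}$ since the lifted operators carry Dirichlet conditions — and finally lifted to all $\gamma\ge 3/2$ by the Aizenman--Lieb identity \eqref{eqn:AizenmanIdentity}. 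If you want to salvage your argument you would need to replace the false eigenvalue correspondence by the genuine commutation-step inequality, at which point you are reproducing the paper's proof.
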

		The inequality we obtain through our factorisation method has a more general form than theorems 1 and 2. In both cases, the results follow by showing that the ground state of $H_0$ ($H_{0,\sigma}$) is log-concave for single-well (non-increasing) potentials. Where we have an explicit factorisation for $H_0$ we are able to show something stronger and improve (\ref{eqn:thm1state}) and (\ref{eqn:thm2}). The following two results show that this is true for the Coulomb and P\"oschl-Teller potentials. We identify $\sigma=\infty$ for the Robin operators with Dirichlet boundary conditions. 
		\begin{theorem}\label{thm:Coulomb}
		    Let $V_0(x)=\kappa/r-\nu(\nu+1)/r^2$ with $\nu\geq-1/2$, $\kappa>0$ and let $V_+\in L^2(\R_{+})$. Then the following holds
			\begin{equation*}
				\sum_{k=1}\left(\sqrt{|E_k(H_{V,\infty})|}-\sqrt{|E_k(H_{0,\infty})}|\right)^3\leq \frac{3}{8}\int_{\R_{+}} V(r)_+^2 \dd r
			\end{equation*}
			where $E_k(H_{0,\infty})=-\kappa^2/4(\nu+k)^2$ are the eigenvalues of $H_{0,\infty}$. 
		\end{theorem}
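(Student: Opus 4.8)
The plan is to reduce the inequality, via the exact Coulomb ladder, to the one–dimensional Benguria--Loss sum rule applied \emph{repeatedly} down the ladder. First I would reduce to $V\geq 0$: since $H_{0,\infty}-V_{+}\leq H_{0,\infty}-V$, replacing $V$ by $V_{+}$ only raises each $|E_k(H_{V,\infty})|$ while leaving the right-hand side unchanged, so it suffices to treat $V=V_{+}\geq 0$; then $E_k(H_{V,\infty})\leq E_k(H_{0,\infty})<0$ makes every summand nonnegative. Writing $H_\nu=H_{0,\infty}$, $\lambda_k=\sqrt{|E_k(H_{0,\infty})|}=\kappa/(2(\nu+k))$ and $\mu_k=\sqrt{|E_k(H_{V,\infty})|}$, the goal becomes $\sum_k(\mu_k-\lambda_k)^3\leq\tfrac38\int_{\R_{+}}V^2$.

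Next I would record the factorisation. The ground state of $H_\nu$ is $u_0(r)=r^{\nu+1}\e^{-\kappa r/(2(\nu+1))}$ with energy $-\lambda_1^2$, and with $A=\frac{\dd}{\dd r}-(\log u_0)'$ one checks $A^{*}A=H_\nu+\lambda_1^2$ and $AA^{*}=H_{\nu+1}+\lambda_1^2$, where $H_{\nu+1}$ is the Coulomb operator with $\nu$ replaced by $\nu+1$. This is the supersymmetric ladder: it encodes $E_k(H_{\nu+1})=E_{k+1}(H_\nu)$, i.e.\ $\lambda_{k+1}$ is the ground-state level of $H_{\nu+1}$, so that stepping $\nu\mapsto\nu+1$ shifts the thresholds from $\lambda_1,\lambda_2,\dots$ to $\lambda_2,\lambda_3,\dots$.

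The heart of the argument is a single commutation step. Let $\psi_1>0$ be the ground state of $H_{V,\infty}$, set $\phi_1=\psi_1'/\psi_1$ and $\widetilde\phi=\phi_1-(\log u_0)'$, the Riccati variable measured against the unperturbed ground state. Removing $\psi_1$ by commutation produces an operator isospectral to $H_{V,\infty}$ with the level $-\mu_1^2$ deleted; reinterpreting its free part as $H_{\nu+1}$ (the singular $1/r^2$ parts of the shifted background and of $2\phi_1'$ cancel, leaving $2\phi_1'+2(\nu+1)/r^2=2\widetilde\phi'$), this operator is exactly $H_{\nu+1}-V'$ with $V'=V+2\widetilde\phi'$, and its eigenvalues are $-\mu_2^2,-\mu_3^2,\dots$. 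Using the Riccati identity $V=(\mu_1^2-\lambda_1^2)-\widetilde\phi^2-2(\log u_0)'\,\widetilde\phi-\widetilde\phi'$ together with the boundary values $\widetilde\phi(0)=0$ (forced by the behaviour $\psi_1\sim r^{\nu+1}(1-\lambda_1 r+\cdots)$ at the origin) and $\widetilde\phi(\infty)=-(\mu_1-\lambda_1)$, an integration by parts yields the sum rule
\begin{equation*}
\int_{\R_{+}}(V')^2=\int_{\R_{+}}V^2-\tfrac83(\mu_1-\lambda_1)^3-4\lambda_1(\mu_1-\lambda_1)^2-4(\nu+1)\int_{\R_{+}}\frac{\widetilde\phi^2}{r^2}\dd r,
\end{equation*}
whence, since the last two terms are nonnegative, $\int_{\R_{+}}(V')^2\leq\int_{\R_{+}}V^2-\tfrac83(\mu_1-\lambda_1)^3$.

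Finally I would iterate: $H_{\nu+1}-V'$ is a problem of the same type with $\nu\mapsto\nu+1$ and thresholds $\lambda_2,\lambda_3,\dots$, so the one-step estimate applies verbatim and gives $\int(V'')^2\leq\int(V')^2-\tfrac83(\mu_2-\lambda_2)^3$, and so on. Telescoping and $\int(\,\cdot\,)^2\geq0$ produce $\tfrac83\sum_k(\mu_k-\lambda_k)^3\leq\int_{\R_{+}}V^2$, which is the claim. The main obstacle is not the algebra but the spectral bookkeeping for commutation on the half-line with two singular endpoints: one must justify $\spec(AA^{*})=\spec(A^{*}A)\setminus\{0\}$ under the Dirichlet condition, verify that the commuted operator genuinely carries the boundary condition of $H_{\nu+1}$, and control the asymptotics of $\psi_1$ at $0$ and $\infty$ underlying $\widetilde\phi(0)=0$, the convergence of the boundary terms, and the finiteness of $\int_{\R_{+}}\widetilde\phi^2/r^2\dd r$.
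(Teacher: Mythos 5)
Your proposal is correct and follows essentially the same route as the paper: the shape-invariant Coulomb ladder $H_\nu\mapsto H_{\nu+1}$, a single commutation step producing the sum rule $\int (V')^2=\int V^2-\tfrac83(\mu_1-\lambda_1)^3-4\lambda_1(\mu_1-\lambda_1)^2+4\int g_1' f_1^2$ with the boundary contribution only at infinity (since $f_1(0)=0$, which the paper verifies via Whittaker-function asymptotics), dropping the nonnegative terms using $g_1'=-(\nu+1)/r^2<0$, and telescoping down the ladder. Your $\widetilde\phi$ is exactly the paper's $f_1$ and your one-step identity is the paper's Proposition \ref{prop:technical} specialised to the half-line, so the two arguments coincide.
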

		The P\"oschl-Teller case is different in that the potential has finitely many eigenvalues and holds on the whole of $\R$. In this case we obtain a direct improvement of \eqref{eqn:liebthir}.
		\begin{theorem}\label{thm:soliton}
			Let $V_0(x)=\nu(\nu+1)\sech^2(x)$ with $\nu>0$ and let $V_+\in L^2(\R)$. Then the following holds
			\begin{equation*}
                \sum_{k=1}^{\lceil\nu\rceil}\left(\sqrt{|E_k(H_V)|}-\sqrt{|E_k(H_0)|}\right)^3+\sum_{k=\lceil\nu\rceil+1} |E_k(H_V)|^{3/2}\leq \frac{3}{16}\int_{\R} V(x)_+^2 \dd x
			\end{equation*}
			where $E_k(H_0)=-(\nu-k+1)^2$ are the eigenvalues of $H_0$. In particular,
			\begin{equation}\label{eqn:polsch2nd}
			    \sum_{k=\lceil\nu\rceil+1}|E_k(H_V)|^\gamma\leq L_{\gamma,1}^{\mathrm{cl}}\int_{\R} V(x)_+^{\gamma+1/2} \dd x
			\end{equation}
			holds for all $\gamma\geq3/2$.
		\end{theorem}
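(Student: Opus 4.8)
The plan is to exploit the explicit supersymmetric factorisation of the P\"oschl-Teller operator so as to run the commutation scheme with a reference energy that descends through the tower of background eigenvalues $-(\nu-k+1)^2$, instead of being pinned at the ground energy $-\nu^2$ as in Theorem~\ref{thm:firsteigen}. First I would reduce to $V\geq 0$: since $H_V\geq H_{V_+}$, replacing $V$ by $V_+$ only enlarges the left-hand side while leaving the right-hand side unchanged, and then $H_V\leq H_0$ gives by min-max that $\sqrt{|E_k(H_V)|}\geq\sqrt{|E_k(H_0)|}=\nu-k+1$ for $1\leq k\leq\lceil\nu\rceil$, so every distance in the first sum is already nonnegative. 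I would also record that the target constant is sharp, $\tfrac{3}{16}=L^{\mathrm{cl}}_{3/2,1}$.

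Next I would set up the factorisation chain. Writing $H^{(s)}=-\frac{\mathrm{d}^2}{\mathrm{d}x^2}-s(s+1)\sech^2(x)$ and $A_s=\frac{\mathrm{d}}{\mathrm{d}x}+s\tanh(x)$, a direct computation gives $A_s^*A_s=H^{(s)}+s^2$ and $A_sA_s^*=H^{(s-1)}+s^2$, so that $A_s$ intertwines the backgrounds, $A_sH^{(s)}=H^{(s-1)}A_s$, with $\ker A_s=\mathrm{span}(\sech^s)$. The ground state $\sech^s$ is log-concave, since $(\log\sech^s)''=-s\,\sech^2<0$, so each $H^{(s)}$ falls under the single-well hypotheses already used. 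The core step is a reduction lemma: commuting the perturbed operator $H^{(\nu)}-V$ with the \emph{background} ground state $\sech^\nu$ produces a partner operator over the $H^{(\nu-1)}$ background whose negative spectrum consists of the eigenvalues $-\mu_k^2$ of $H^{(\nu)}-V$ with $k\geq 2$, while the removed level $-\mu_1^2$ is accounted for, in the spirit of Benguria-Loss, by the term $(\mu_1-\nu)^3$ in the $\tfrac{3}{16}\int V_+^2$ budget. Iterating $\lceil\nu\rceil$ times lowers the background parameter to $\nu-\lceil\nu\rceil\in(-1,0]$, at which point $H^{(\nu-\lceil\nu\rceil)}$ has empty negative spectrum.

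The accounting is where the explicit structure is essential. Because the P\"oschl-Teller potentials are reflectionless, each commutation strips off a full background level at no cost to the perturbation budget: the transformed perturbation satisfies $\int (V^{(j)})_+^2\leq\int V_+^2$, and the successive contributions $(\mu_k-(\nu-k+1))^3$ add up into the single bound $\tfrac{3}{16}\int V_+^2$. After the $\lceil\nu\rceil$-th step the residual operator is a Schr\"odinger operator whose background $H^{(\nu-\lceil\nu\rceil)}$ is a nonnegative potential barrier, so the sharp free Lieb-Thirring inequality at $\gamma=3/2$ applies to the remaining eigenvalues and yields $\sum_{k>\lceil\nu\rceil}|E_k(H_V)|^{3/2}\leq\tfrac{3}{16}\int V_+^2$; combining the two contributions gives the main display. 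Finally, \eqref{eqn:polsch2nd} follows by discarding the nonnegative distance terms and lifting from $\gamma=3/2$ to every $\gamma\geq 3/2$ by the Aizenman-Lieb argument.

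The main obstacle is controlling the non-commutativity in the reduction lemma: $A_s$ intertwines the backgrounds exactly, but $[A_s,V]=V'$ obstructs a clean isospectral transfer of the perturbed operator, so both the eigenvalue bookkeeping and the inequality $\int(V^{(j)})_+^2\leq\int V_+^2$ must be secured by a careful quadratic-form analysis rather than by pure intertwining. It is precisely the exact solvability (reflectionlessness) of the P\"oschl-Teller background that keeps this accounting exact and prevents the background from leaking into the perturbation budget; the same mechanism, with $\sech^s$ replaced by the Coulomb ground states, underlies Theorem~\ref{thm:Coulomb}.
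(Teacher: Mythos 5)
Your overall architecture matches the paper's: iterate the commutation scheme down the tower of background levels $-(\nu-k+1)^2$ using the explicit log-concave ground states $\sech^{\,\nu-k+1}$, observe that after $\lceil\nu\rceil$ steps the lifted background potential $(\nu-\lceil\nu\rceil)(\nu-\lceil\nu\rceil+1)\sech^2$ is nonpositive so it can be dropped and the sharp free Lieb--Thirring bound applied to the tail, and finish with Aizenman--Lieb. But the central step of your argument --- the ``reduction lemma'' --- is not correct as stated, and the difficulty you flag at the end is precisely the part of the proof that cannot be deferred. Conjugating $H^{(\nu)}-V$ by $A_\nu=\frac{\mathrm{d}}{\mathrm{d}x}+\nu\tanh x$ (built from the \emph{background} ground state $\sech^\nu$) does not produce a partner operator whose negative spectrum is $\{-\mu_k^2\}_{k\geq 2}$ of the \emph{perturbed} operator: $A_\nu^*A_\nu=H^{(\nu)}+\nu^2$ removes the background zero mode, not the perturbed ground state, and since $\sech^\nu$ is not an eigenfunction of $H^{(\nu)}-V$ no single-commutation with it strips an eigenvalue of $H_V$. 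The paper's scheme resolves this by factorising with \emph{two} functions at each step: $D_k=-\frac{\mathrm{d}}{\mathrm{d}x}-g_k$ from the background ground state and $Q_k=D_k-f_k$ with $f_k=D_kv_k/v_k$ built from the ground state $v_k$ of the lifted perturbed operator, so that $Q_k^*Q_k=H_V^{(k)}+\lambda_k$ and $Q_kQ_k^*$ is the next lifted operator. Your proposal never introduces $f_k$, and without it neither the eigenvalue bookkeeping nor the budget inequality has a mechanism.

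Relatedly, the inequality you assert, $\int(V^{(j)})_+^2\leq\int V_+^2$, is not a consequence of reflectionlessness; in the paper it is an exact identity
$\int(V^{(k+1)})^2=\int(V^{(k)})^2-\frac{8}{3}\bigl(2\lambda_k^{3/2}-3\lambda_k\sqrt{\mu_k}+\mu_k^{3/2}\bigr)+4\int g_k'f_k^2$,
obtained from the Riccati equation $f_k'=\lambda_k-\mu_k-V^{(k)}-f_k^2-2g_kf_k$, integration by parts, and the asymptotics $f_k\to\mp(\sqrt{\lambda_k}-\sqrt{\mu_k})$; the last term is then discarded because $g_k'=-(\nu-k+1)\sech^2\leq 0$. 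This telescoping is also what fixes your accounting at the end: the tail eigenvalues are controlled by $\frac{3}{16}\int(V^{(\lceil\nu\rceil+1)})^2$, i.e.\ by what \emph{remains} of the budget, not by a second independent copy of $\frac{3}{16}\int V_+^2$ (which would double the constant). As written, the proposal identifies the right skeleton but leaves the load-bearing lemma unproved.
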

		The first inequality contains all discrete spectral data from $H_V$, by removing the first $\lceil \nu\rceil$ terms we can apply a Aizenman-Lieb lifting argument to obtain \eqref{eqn:polsch2nd}. The latter is an improvement of the standard Lieb-Thirring inequality, it implies that for any $N\in\N$
		\begin{equation*}
		    \sum_{k=N}\Big\vert E_k\left(-\frac{\mathrm{d}^2}{\mathrm{d}x^2}-V\right)\Big\vert^\gamma\leq L_{\gamma,1}^{\mathrm{cl}}\int_{\R}\left(V(x)-N(N-1)\mathrm{sech}^2(x)\right)_{+}^{\gamma+\frac{1}{2}}\dd x.
		\end{equation*}

		In what proceeds we begin by introducing the factorisation scheme for the perturbed Schr\"odinger operator $H_V$. We prove a technical proposition which will be the general inequality from which we derive theorems 1-4. In Section \ref{section:thm12} we prove that the single-well property for $V_0$ is sufficient to apply our bound and we prove theorems 1 and 2 for the stated restrictions on $V_0$. Finally, in Section \ref{section:polcou} we present the explicit factorisations of the P\"oschl-Teller and Coulomb operators and formulate the improved results in these cases.
	\section{Factorisation scheme for $H_{V}$} \label{Section:facscheme}
		We start with the assumption that $V_0,V\in C_0^\infty(\R)$ and $V_0,V\geq0$. The corresponding operators $H_0=-\frac{\mathrm{d}^2}{\mathrm{d}x^2}-V_0$ and $H_V=H_0-V$ on $L^2(\R)$ have negative eigenvalues which we denote by $\{E_k(H_0)\}_{k=1}^M$ and $\{E_k(H_V)\}_{k=1}^N$ in increasing order (it is clear that $N\geq M$). For the sake of brevity, we let $\mu_k=-E_k(H_0)$ and $\lambda_k=-E_k(H_V)$, unless we wish to distinguish the underlying operator.
		
		The factorisation scheme for $H_0$ goes as follows: let $\phi_1$ be the first eigenfunction of $H_0$ corresponding to $-\mu_1$. It is well-known that $\phi_1$ can be taken to be positive without zeros. Consider the first order operator $D_1=-\frac{\mathrm{d}}{\mathrm{d}x}-g_1$, where $g_1(x)=\phi_1^\prime(x)/\phi_1(x)$. We see that $H_0$ factorises by $D_1$, in the sense that
		\begin{equation}
		\begin{split}\label{EqnD}
		    &D_1^\ast D_1=H_0+\mu_1,\\
		    & D_1 D_1^\ast =H_0-2g_1^\prime+\mu_1.
        \end{split}
		\end{equation}
		This follows by expanding with $D_1^\ast=-\frac{\mathrm{d}}{\mathrm{d}x}-g_1$ and using that $g_1$ solves the Riccati equation
		\begin{equation}\label{EqnRiccatig}
			g_1^\prime 
			=\mu_1 -V_0 -g_1^2
		\end{equation}
		deduced directly from its definition. We note that the relations (\ref{EqnD}) can be seen precisely by looking on the associated quadratic forms, where it holds for all functions in the form core $C_0^\infty(\R)$. 
		
		As a consequence it follows that $D_1^\ast D_1$ has discrete spectrum consisting of the eigenvalues $\{-\mu_k+\mu_1\}_{k=1}^M$. It is known that $D_1^\ast D_1$ and $D_1D_1^\ast$ have the same non-zero discrete spectrum. Thus, if zero is not an eigenvalue of $D_1D_1^\ast$ then we define the operator \[H_0^{(2)}= H_0-2g_1^\prime=-\frac{\mathrm{d}^2}{\mathrm{d}x^2}-V_0^{(2)}\] and deduce that it has eigenvalues $\{-\mu_k\}_{k=2}^M$. We call this the lifted operator, with lifted potential $V_0^{(2)}=V_0+2g_1^\prime$. It can be deduced that zero isn't an eigenvalue of $D_1D_1^\ast$ from the asymptotic behaviour of $g_1$
		\begin{equation} \label{eqn:gasmptot}
			g_1(x)=\begin{cases}
				-\sqrt{\mu_1},  &\text{ as } x\rightarrow \infty,\\
				\sqrt{\mu_1}, \ &\text{ as }x\rightarrow-\infty
			\end{cases}
		\end{equation}
		derived from solving for $\phi_1$ outside the support of $V_0$. We describe this in detail below.

		For the perturbed operator $H_V$ we proceed with the same argument. Consider its lowest eigenvalue $-\lambda_1$ and corresponding eigenfunction $\psi_1$ and define the operator $Q_1=D_1-f_1$, where $D_1$ is as before and $f_1=D_1\psi_1/\psi_1$. It follows that
		\begin{equation}
		\begin{split}
			&Q_1^\ast Q_1=H_V+\lambda_1,\\\label{EqnQ}
			&Q_1Q_1^\ast = H_V+\lambda_1-2f_1^\prime -2g_1^\prime
        \end{split}
		\end{equation}
		where we use that $f_1$ solves the Riccati-type equation
		\begin{equation}\label{eqn:Riccatif}
			\begin{split}
			f_1^\prime=\lambda_1-\mu_1 -V -f_1^2 -2g_1f_1.
			\end{split}
		\end{equation}
		
		From the behaviour of $\psi_1$ outside the support of both potentials, and using (\ref{eqn:gasmptot}), we have
		\begin{equation*}\label{eqn:asympf}
			f_1(x)=\begin{cases}
				-\sqrt{\lambda_1}+\sqrt{\mu_1}, &\text{ as } x\rightarrow \infty,,\\
				\sqrt{\lambda_1}-\sqrt{\mu_1}, &\text{ as } x\rightarrow -\infty.
			\end{cases}
		\end{equation*}
		Again, we use that $Q_1^\ast Q_1$ and $Q_1 Q_1^\ast$ have the same non-zero discrete spectrum and from \eqref{EqnQ} we observe that $Q_1^\ast Q_1$ has eigenvalues given by $\{-\lambda_k+\lambda_1\}_{k=1}^N$. We suppose that $0$ is an eigenvalue of $Q_1Q_1^\ast$. If so, then there would exist a nontrivial $\psi\in L^2(\R)$ with $Q_1Q_1^\ast \psi=0$ which would imply that $Q^\ast_1 \psi=0$ and thus $\psi$ solves
		\begin{align*}
			-\psi^\prime-g_1\psi-f_1\psi=0. 
		\end{align*}
		However, outside the supports of $V_0$ and $V$ this would result in the asymptotic behaviour $\psi(x)\sim \exp\left(\pm\sqrt{\lambda_k}x\right)$ as $x\rightarrow\pm \infty$, thus $\psi\notin L^2(\R)$ and $0$ is not an eigenvalue of $Q_1Q_1^\ast$. We conclude from (\ref{EqnQ}) that the new Schr\"odinger operator defined as
		\begin{align*}
			H_{V}^{(2)}= H_V -2f_1^\prime -2g_1^\prime=H_0^{(2)}-(V+2f_1^\prime)= H_0^{(2)}-V^{(2)} \text{ in }L^2(\R)
		\end{align*}
		has lifted discrete spectrum $\{-\lambda_k\}_{k=2}^{N}$.
		
		We repeat this process. We use the ground state of the operators $H_0^{(2)}$ and $H_V^{(2)}$ to factorise again and so on. Explicitly, we obtain a sequence of lifted operators $H_0^{(k)}$ and $H_V^{(k)}$ with ground states $u_k=(\prod_{j=1}^{k-1}D_j)\phi_{k}$ and $v_k=\prod_{j=1}^{k-1} (D_j-f_j)\psi_k$, respectively; where $g_j=u_j^\prime/u_j$, $D_j=\frac{\mathrm{d}}{\mathrm{d}x}-g_j$, $f_j=D_j v_j/v_j$, $Q_j=D_j-f_j$ and where $\phi_{k}$ and $\psi_k$ denote the eigenfunctions of $H_0$ and $H_V$, respectively. Analogous asymptotics and Riccati equations hold for these objects. We repeat this procedure until we exhaust the discrete spectrum of $H_0$.
		
		Following \cite{benguria1999simple}, we form a trivial integral inequality and use the Riccati equations, (\ref{eqn:Riccatif}), satisfied by $f_k$ and the asymptotic properties of $f_k$ and $g_k$ to obtain eigenvalue bounds. Consider for $k\leq M$
		\begin{equation*}\label{eqn:methodint}
		    \begin{split}
			0\leq \int_{\R} \left(V^{(k+1)}\right)^2\dd x=&\int_{\R} \left(V^{(k)}+2f_k^\prime\right)^2\dd x\\=&\int_{\R} \left(V^{(k)}\right)^2\dd x +4\int_{\R} f_k^\prime \left(V^{(k)}+f_k^\prime\right)\dd x,\\
			=&\int_{\R} \left(V^{(k)}\right)^2 \dd x+ 4\int_{\R} f_k^\prime \left(\lambda_k-\mu_k -f_k^2-2g_k f_k\right)\dd x,\\
			=&\int_{\R} \left(V^{(k)}\right)^2 \dd x+ 4 f_k (\lambda_k-\mu_k )\big\vert_{-\infty}^\infty - \frac{4}{3}f^3_k\big\vert_{-\infty}^\infty\\ &-4f_k^2g_k\big\vert_{-\infty}^\infty+4\int_{\R} g_k^\prime f_k^2\dd x
			\end{split}
		\end{equation*}
		from which the asymptotics give us explicit terms, we find
		\begin{equation*}\label{eqn:concluint}
		\frac{8}{3}\left(2\lambda_k^{\frac{3}{2}}-3\lambda_k\sqrt{\mu_k}+\mu_k^\frac{3}{2}\right)\leq\int_{\R} \left(V^{(k)}\right)^2 \dd x+ 4\int_{\R} g_k^\prime f_k^2 \dd x.
		\end{equation*}
		Using this inductively, we conclude that 
		\begin{equation*}
		    \sum_{k=1}^M\left(\lambda_k^{\frac{3}{2}}-\frac{3}{2}\lambda_k\sqrt{\mu_k}+\frac{\mu_k^\frac{3}{2}}{2}\right)\leq \frac{3}{16}\int_{\R} V^2 \dd x+ \frac{3}{4}\sum_{k=1}^N\int_{\R} g_k^\prime f_k^2 \dd x.
		\end{equation*}
		Finally, we note that the restrictions on $V_0$ and $V$ can be loosened. All we require from $V_0$ and $V$ is that the asymptotics used above hold true. In particular, we require that the ground states of the lifted operators $H_0^{(k)}$ and $H_V^{(k)}$, $u_k$ and $v_k$ given above, satisfy
		\begin{equation}
		    \begin{split}\label{asymptotics}
		    & u_k(x)\sim e^{\mp \sqrt{\mu_k}x}, \text{ and} \\
		    & v_k(x)\sim e^{\mp \sqrt{\lambda_k}x}
		    \end{split}
		\end{equation}
		as $x\rightarrow \pm \infty$, for all $k$. In this case there may be finite or infinitely many eigenvalues (accumulating at zero). 
		\begin{proposition}\label{prop:technical}
				Consider the Schr\"odinger operators $H_0$ and $H_V$ in $L^2(\R)$ with negative eigenvalues $\{-\mu_k\}_{k=1}^{M}$ and $\{-\lambda_k\}_{k=1}^N$ respectively such that the lifted eigenfunctions satisfy \eqref{asymptotics}. Then for any $K\leq M$
				\begin{align*}
				\sum_{k=1}^K\left[(\sqrt{\lambda_k}-\sqrt{\mu_k})^3+\frac{3}{2} \sqrt{\mu_k} \left(\sqrt{\lambda_k}-\sqrt{\mu_k}\right)^2\right] \leq& \frac{3}{16}\int_{\R} V(x)^2 \dd x+\mathcal{E}_K
				\end{align*}
				where $\mathcal{E}_K=\frac{3}{4}\sum_{k=1}^K\int_{\R} g_k^\prime f_k^2\dd x$ and 
				with $g_k$ and $f_k$ as above. 
		\end{proposition}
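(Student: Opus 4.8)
The plan is to make rigorous the formal computation carried out just above the statement, organising it as an exact per-level identity which is then summed. Fix $k \le K \le M$, so that both $\mu_k$ and $\lambda_k$ are defined and the factorisations of $H_0^{(k)}$ and $H_V^{(k)}$ are available. The starting point is the trivial bound $\int_\R (V^{(k+1)})^2 \dd x \ge 0$ together with the defining relation $V^{(k+1)} = V^{(k)} + 2 f_k'$. Expanding the square gives
\begin{equation*}
\int_\R (V^{(k+1)})^2 \dd x = \int_\R (V^{(k)})^2 \dd x + 4\int_\R f_k'\bigl(V^{(k)} + f_k'\bigr)\dd x,
\end{equation*}
and the level-$k$ analogue of the Riccati equation \eqref{eqn:Riccatif} lets me replace $V^{(k)} + f_k' = \lambda_k - \mu_k - f_k^2 - 2 g_k f_k$, reducing the cross term to quantities that can be integrated by hand.

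First I would integrate by parts the three resulting contributions, $(\lambda_k - \mu_k)\int f_k'$, $\int f_k' f_k^2 = \tfrac13\int (f_k^3)'$, and $\int f_k' g_k f_k = \tfrac12\int (f_k^2)' g_k$; only the last produces a surviving bulk term, namely $4\int_\R g_k' f_k^2 \dd x$, which is what enters $\mathcal{E}_K$. The remaining boundary contributions are evaluated using the limits of $f_k$ and $g_k$, which follow directly from the asymptotic hypotheses \eqref{asymptotics}: writing $g_k = u_k'/u_k$ and $f_k = v_k'/v_k - g_k$ one reads off $g_k(\pm\infty) = \mp\sqrt{\mu_k}$ and $f_k(\pm\infty) = \mp(\sqrt{\lambda_k} - \sqrt{\mu_k})$. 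Substituting these limits and simplifying, with the help of $\lambda_k - \mu_k = (\sqrt{\lambda_k}-\sqrt{\mu_k})(\sqrt{\lambda_k}+\sqrt{\mu_k})$, collapses the boundary data to $-\tfrac{8}{3}\bigl(2\lambda_k^{3/2} - 3\lambda_k\sqrt{\mu_k} + \mu_k^{3/2}\bigr)$ and yields the exact identity
\begin{equation*}
\int_\R (V^{(k+1)})^2 \dd x = \int_\R (V^{(k)})^2 \dd x - \tfrac{8}{3}\bigl(2\lambda_k^{3/2} - 3\lambda_k\sqrt{\mu_k} + \mu_k^{3/2}\bigr) + 4\int_\R g_k' f_k^2 \dd x.
\end{equation*}

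The final step is to sum this identity over $k = 1, \dots, K$. The $\int (V^{(k)})^2$ terms telescope, leaving $\int (V^{(K+1)})^2 - \int (V^{(1)})^2$; since $V^{(1)} = V$ and $\int (V^{(K+1)})^2 \ge 0$, I obtain
\begin{equation*}
\tfrac{8}{3}\sum_{k=1}^K\bigl(2\lambda_k^{3/2} - 3\lambda_k\sqrt{\mu_k} + \mu_k^{3/2}\bigr) \le \int_\R V^2 \dd x + 4\sum_{k=1}^K\int_\R g_k' f_k^2 \dd x.
\end{equation*}
Dividing through by $\tfrac{16}{3}$ and using the elementary identity $\lambda_k^{3/2} - \tfrac32\lambda_k\sqrt{\mu_k} + \tfrac12\mu_k^{3/2} = (\sqrt{\lambda_k}-\sqrt{\mu_k})^3 + \tfrac32\sqrt{\mu_k}(\sqrt{\lambda_k}-\sqrt{\mu_k})^2$ then recovers the stated inequality with $\mathcal{E}_K = \tfrac34\sum_{k=1}^K\int_\R g_k' f_k^2 \dd x$.

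The main obstacle is analytic rather than algebraic: one must check that every integral above is finite and that the integrations by parts are legitimate, i.e. that the lifted potentials $V^{(k)}$ stay in $L^2(\R)$ and that $f_k$ and $g_k$ approach their limits quickly enough for the boundary terms to make sense. This is precisely the role of the asymptotic hypotheses \eqref{asymptotics}, so the work reduces to verifying that the factorisation construction preserves enough decay at each level. Once that is secured, the per-level relation is an exact equality, so the telescoping introduces no accumulated error and the argument goes through verbatim even when $M = \infty$, since $K$ is kept finite.
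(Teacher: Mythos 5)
Your proposal is correct and follows essentially the same route as the paper: the trivial inequality $0\leq\int_{\R}(V^{(k+1)})^2\dd x$, expansion via $V^{(k+1)}=V^{(k)}+2f_k'$, substitution of the Riccati equation, integration by parts with the boundary values $f_k(\pm\infty)=\mp(\sqrt{\lambda_k}-\sqrt{\mu_k})$ and $g_k(\pm\infty)=\mp\sqrt{\mu_k}$, and telescoping over $k$. Your algebra (the collapse of the boundary terms to $-\tfrac{8}{3}(2\lambda_k^{3/2}-3\lambda_k\sqrt{\mu_k}+\mu_k^{3/2})$ and the final identity rewriting this in terms of $(\sqrt{\lambda_k}-\sqrt{\mu_k})$) checks out, and your closing remark about justifying the integrations by parts matches the paper's reduction to the asymptotic hypotheses \eqref{asymptotics}.
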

		For the P\"oschl-Teller potential we shall see that after factorising $M$ times, the potential $V_0^{(M+1)}$ is strictly negative. In this case, one can remove it with a variational argument and continue on with the scheme of Benguria-Loss. Though this does not always occur, the remark below explains how we can formally apply the result of Frank-Simon-Weidl.
		\begin{remark}
		After we have exhausted all the $M$ eigenvalues of $H_0$ we have the operator \[H_V^{(M+1)}=-\frac{\mathrm{d}^2}{\mathrm{d}x^2}-V_0^{(M+1)}-V^{(M+1)}.\] Since $H_0^{(M+1)}$ has no negative eigenvalues, we apply the bound of Frank-Simon-Weidl \eqref{eqn:franksimonweidl} to deduce the existence of $\beta>0$ such that
		\begin{equation}\label{eqn:remarkFSW}
		    \beta^{-2}\sum_{k=M+1}|E_k(H_V)|^{3/2}\leq \frac{3}{16}\int_{\R}\left(V^{(M+1)}\right)^2\dd x.
		\end{equation}
		Thus the additional eigenvalues can be included into the bound in the proposition above by expanding the right-hand side in \eqref{eqn:remarkFSW}.
		\end{remark}
		The majority of the above analysis holds for the Robin operators. The following remark makes clear the similarities and distinctions.
		\begin{remark}\label{rem:robinrem}
		    The operators $H_{0,\sigma}$ and $H_{V,\sigma}$ in $L^2(\R_{+})$ with boundary conditions $\phi^\prime(0)-\sigma\phi(0)=0$ factorise similarly. In particular, (\ref{EqnD}) and (\ref{EqnQ}) hold with $g_1$ and $f_1$ analogously defined according to the ground states. For compactly supported potentials the same asymptotic behaviour holds as $r\rightarrow\infty$ with $g_1(0)=\sigma$ and $f_1(0)=0$ at the boundary. Thus, in particular, we find 
		    \begin{equation} \label{ineqrobin}
					\left(\sqrt{|E_1(H_{V,\sigma})|}-\sqrt{|E_1(H_{0,\sigma})|}\right)^3 \leq \frac{3}{8}\int_{\R_{+}} V^2\dd r+\frac{3}{2}\int_{\R_{+}} g_1^\prime f_1^2\dd r
			\end{equation} 
			which holds for more general $V_0, V$ as in the above proposition (satisfying condition \eqref{asymptotics} as $r\rightarrow +\infty$). As noted in \cite{exner2014some}, an important distinction occurs after the first factorisation. The lifted Schr\"odinger operators $H^{(2)}_{0,\infty}$ and $H^{(2)}_{V,\infty}$ have Dirichlet conditions at $0$.
		\end{remark}
		The remainder of this paper is largely concerned with removing the term $\mathcal{E}_K$ in the above proposition. A sufficient condition is for $g_k^\prime$ to be negative almost everywhere; that is, the ground states of the operators $H_0^{(k)}$ are log-concave. This is equivalent to $H_0^{(k+1)}\geq H_0^{(k)}$ in the form sense for all $k\leq K$. An ordering of this sort seems like a reasonable assumption, given that the energies are lifted at each step of factorisation. In the next two sections, we are be able to show this for $K=1$ when $V_0$ is single well and for all $K$ for specific examples. However, this stronger property does not hold in general, see Remark \ref{rmk:doublewell}. 
		\begin{remark}\label{rmk:doublewell}
	        Consider the case where $V_0$ is a double-well potential, it can be seen that the corresponding ground state is also double-well. Double-well functions are not quasi-concave and therefore are not log-concave. An analytic proof of this can be found for the hyperbolic double-well potential in \cite{downing2013solution}.
	    \end{remark}
	\section{Proof of theorems \ref{thm:firsteigen} and \ref{thm:Robincase}} \label{section:thm12}
	    Log-concavity of the ground state has been explored by many authors. On a bounded domain, for the Dirichlet and Neumann Laplacians this is a simple fact (for the Robin case there are exceptions, see \cite{andrews2020non}). For Schr\"odinger operators on a bounded domain, H.Brescamp and E.Lieb showed as a consequence of a functional inequality in \cite{brascamp2002extensions} that this holds for concave potentials. An application, and simpler proof, of this fact was presented in \cite{singer1985estimate} where the authors used it to derive a lower bound on the difference of the first two eigenvalues, the \enquote*{fundamental gap}. The single-well condition arose with the same application in the paper of M.Ashbaugh and R.Benguria \cite{ashbaugh1989optimal}, though log-concavity was not explicitly shown or used.
	    
	    Works by B.Baumgartner, H.Grosse and A.Martin have shown that log-concavity holds for the ground states of operators on $L^2(\R)$ of the form $H_V$ where $V$ is \enquote*{$V_0-$concave}, see \cite{baumgartner1990relative}. Originally this was shown for Hardy-type $V_0$ in \cite{baumgartner1986level}, a proof which was later simplified in \cite{ashbaugh1988log}. For our application we only need to look on $H_0$. 
	
		To prove Theorem \ref{thm:firsteigen} we apply Proposition \ref{prop:technical} whilst removing the last term in the bound. Log-concavity is sufficient for the latter point, but the concavity condition on $V_0$ of Brescamp-Lieb causes problems with the asymptotics required \eqref{asymptotics}. Thus, we look at $V_0$ that are single well and prove log-concavity in this case. The remainder of the proof relies on a variational argument.
		\begin{lemma} \label{lemsingle}
		    Suppose that $(1+|x|)V_0\in L^1(\R)$ and that $V_0$ is positive and single-well, then the eigenfunction $\phi$ corresponding to the lowest eigenvalue $-\mu$ of $H_0$ on $L^2(\R)$ is log-concave.
		\end{lemma}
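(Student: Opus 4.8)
The plan is to reduce the claim to a sign condition on the logarithmic derivative of $\phi$ and then to extract that sign from the single-well structure via an integrating-factor argument. Since the ground state may be taken positive, $g=\phi'/\phi$ is well defined, and log-concavity of $\phi$ is precisely the statement that $(\log\phi)''=g'\le 0$ almost everywhere; thus it suffices to prove $g'\le 0$ on all of $\R$. I would use that $g$ solves the Riccati equation \eqref{EqnRiccatig}, $g'=\mu-V_0-g^2$, and has the boundary behaviour recorded in \eqref{eqn:gasmptot}, $g\to-\sqrt{\mu}$ as $x\to+\infty$ and $g\to+\sqrt{\mu}$ as $x\to-\infty$. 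The hypothesis $(1+|x|)V_0\in L^1(\R)$ is what guarantees that $-\mu$ is a genuine lowest eigenvalue and that $\phi$ is positive, $C^1$, and decays like $e^{\mp\sqrt{\mu}x}$; in particular $\phi^2 g'\to 0$ at both ends, which I will need to fix constants of integration.

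The key step is to multiply the Riccati relation by the natural integrating factor $\phi^2$. Using $(\phi^2)'=2g\phi^2$ (equivalently, starting from $\phi^2 g'=\phi\phi''-(\phi')^2$ and inserting $\phi''=(\mu-V_0)\phi$) one obtains the identity
\begin{equation*}
  (\phi^2 g')' = -\phi^2\, V_0',
\end{equation*}
where $V_0'$ is to be read as the Lebesgue–Stieltjes measure $\mathrm{d}V_0$. This is the only place the single-well hypothesis enters, and it enters exactly through the sign it provides: if $x_0$ is a point up to which $V_0$ is non-decreasing and beyond which it is non-increasing, then $\mathrm{d}V_0\ge 0$ on $(-\infty,x_0)$ and $\mathrm{d}V_0\le 0$ on $(x_0,\infty)$.

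I would then integrate the identity from each end, using the decay $\phi^2 g'\to 0$ to discard the boundary terms, obtaining the two representations
\begin{equation*}
  \phi^2(x)\,g'(x) = -\int_{-\infty}^x \phi^2\,\mathrm{d}V_0 = \int_x^{\infty}\phi^2\,\mathrm{d}V_0,
\end{equation*}
the two being consistent because integrating the identity over all of $\R$ gives $\int_\R \phi^2\,\mathrm{d}V_0=0$. For $x\le x_0$ the first representation integrates $\phi^2\ge 0$ against a non-negative measure, so $\phi^2 g'(x)\le 0$; for $x\ge x_0$ the second integrates against a non-positive measure, again giving $\phi^2 g'(x)\le 0$. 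Since $\phi^2>0$, in both cases $g'(x)\le 0$, which is exactly the asserted log-concavity.

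The main obstacle will be regularity rather than the sign bookkeeping: the hypotheses only grant that $V_0$ is single-well with $(1+|x|)V_0\in L^1$, so $V_0$ need not be differentiable, and both the integrating-factor identity and the fundamental theorem of calculus must be justified in the bounded-variation sense, with $V_0'$ treated throughout as the signed measure $\mathrm{d}V_0$. I would handle this by working with the absolutely continuous quantity $\phi^2 g'=\phi\phi''-(\phi')^2$ (whose distributional derivative is the measure $-\phi^2\,\mathrm{d}V_0$), or, if cleaner, by first proving the estimate for smooth single-well approximants $V_0^\varepsilon\to V_0$ and checking that the decay asymptotics and the pointwise inequality $g'\le 0$ survive the limit.
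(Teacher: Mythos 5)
Your argument is correct, but it takes a genuinely different route from the paper. The paper proves $g'\le 0$ by contradiction and purely pointwise reasoning: assuming $g'>0$ on some interval, it uses the limits $g\to\mp\sqrt{\mu}$ to produce intervals of decrease of $g$ on either side, extracts three points $x_1<x_2<x_3$ at which $g$ takes a common value $c$, and reads off from the Riccati equation the pattern $V_0(x_1)>\mu-c^2$, $V_0(x_2)<\mu-c^2$, $V_0(x_3)>\mu-c^2$, which violates the single-well property. You instead establish the exact Wronskian-type identity $\bigl(\phi^2 g'\bigr)'=\bigl(\phi\phi''-(\phi')^2\bigr)'=-\phi^2\,\mathrm{d}V_0$ (which checks out against $\phi''=(\mu-V_0)\phi$) and integrate from each end, using $\phi^2 g'\to 0$ to get the one-sided representations whose signs are dictated directly by the monotonicity of $V_0$ on each side of the well's peak. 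Your approach is more quantitative — it yields an explicit formula for $\phi^2 g'$ rather than just its sign — and localizes the use of the single-well hypothesis to a single sign condition on the measure $\mathrm{d}V_0$; its cost is the measure-theoretic bookkeeping you correctly flag (and note that if $V_0$ blows up at the peak $x_0$, the global identity $\int_\R\phi^2\,\mathrm{d}V_0=0$ is not literally available, though you only need the two one-sided representations for $x<x_0$ and $x>x_0$, each of which involves a finite measure). The paper's route avoids measures entirely and handles non-continuous $V_0$ by extending the pointwise inequalities to neighbourhoods, at the price of a more delicate case analysis with the asymptotics of $g$. Both proofs deliver exactly the statement needed for Proposition \ref{prop:technical}.
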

		\begin{proof}
		    Under the condition that $(1+|x|)V_0\in L^1(\R)$, by considering Jost functions, it follows that $\varphi$ has exponential decay obeying \eqref{asymptotics} (see \cite{Chadan1989}). With this in hand, we define the log-derivative of the ground state as $g=\phi^\prime/ \phi$. It is weakly differentiable (hence continuous), obeys the Riccati equation \eqref{EqnRiccatig} and tends to $\mp \sqrt{\mu}$ as $x\rightarrow\pm\infty$. 
		    
		    Suppose that $g^\prime>0$ (almost everywhere) on some open interval $I_1$. If there is no disjoint open interval where $g^\prime<0$ beforehand then the asymptotics tell us that $g>\sqrt{\mu}$ on $I_1$ and 
			\begin{equation*}
		        V_0(x)=\mu-g^2(x)-g^\prime(x)-g(x)<-g^\prime(x)<0 \text{ on $I_1$}
			\end{equation*}
			 using the Riccati equation, which contradicts the positivity of $V_0$. Hence, there exists a open interval $I_0$ on which $g^\prime<0$. Furthermore, we can assume that $g>-\sqrt{\mu}$ on $I_1$, otherwise, using the asymptotics, one can find that $V_0<0$ as before and obtain a contradiction. Therefore, using the behaviour of $g$ as $x\rightarrow \infty$ there must exist another disjoint open interval $I_2$ after, on which $g^\prime<0$. 
			 
			 We deduce the existence of three points $x_0<x_1<x_2$ with $x_k\in I_k$, $k=1,2,3$  for which  $g(x_1)=g(x_2)=g(x_2)\eqcolon c$. Using the Riccati equation and the respective sign of $g^\prime$ we find
			\begin{equation}\label{eqn1}
			    \begin{split}
			        V_0(x_1)>\mu-c^2, \\
			        V_0(x_2)<\mu-c^2, \\
			        V_0(x_3)>\mu-c^2. 
			    \end{split}
			\end{equation}
			If $V_0$ is continuous we are finished as this contradicts the single-well property. Otherwise, by continuity of $g$, the inequalities \eqref{eqn1} extend a.e. to neighbourhoods about $x_1$, $x_2$ and $x_3$ respectively. We conclude that $g^\prime\leq 0$. 
		\end{proof}
		Assume that $V_0$ satisfies the conditions of the lemma and that $V\in C_0^\infty(\R)$ and $V\geq 0$. We apply Proposition \ref{prop:technical} for a bound on the first eigenvalues ($K=1$), removing the last term owing to the negativity of $g_1^\prime$. To obtain the full statement of Theorem \ref{thm:firsteigen} we use a variational argument.
		
		As a consequence of Lemma \ref{lemsingle}, the lifted operator $H^{(2)}_V$ satisfies
		\begin{equation*}
		    H_V^{(2)}\geq -\frac{\mathrm{d}^2}{\mathrm{d}x^2}-V_0-V^{(2)}=H_{V^{(2)}}
		\end{equation*}
		in the quadratic form sense. We look on the operator $H_{V^{(2)}}$ and denote it by $H^{(2)}$, thus we have $|E_2(H_V)|=|E_1(H^{(2)}_V)|\leq |E_1(H^{(2)})|$. We lift $H^{(2)}=H_{V^{(2)}}$ again, according to Section \ref{Section:facscheme}, dispose of $g_1$ and define the lifted operator
		\begin{equation*}
		    H^{(3)}=H_{V^{(2)}}^{(2)}\geq -\frac{\mathrm{d}^2}{\mathrm{d}x^2}-V_0-(V^{(2)})^{(2)}
		    =H_{(V^{(2)})^{(2)}}.
		\end{equation*}
		Repeating this, we obtain a sequence of operators $H^{(k)}=H_{V_k}$ with $V_k=((V^{(2)})^{(2)\cdots})^{(2)}$, iterated $k-1$ times, and such that $|E_k(H_V)|\leq |E_1(H^{(k)})|$. Once $|E_k(H_V)|\leq |E_1(H_0)|$ then we stop. Applying the inductive integral method for this sequence of potentials, as in Proposition \ref{prop:technical}, we form
		\begin{equation}\label{eqn:proofthm1}
		    \begin{split}
		    \sum_k\left[\sqrt{|E_k(H_V)|}-\sqrt{|E_1(H_0)|} \right]_{+}^3&\leq \sum_k \left(\sqrt{|E_1(H^{(k)})|}-\sqrt{|E_1(H_0)|}\right)^3
		    \\&\leq \frac{3}{16}\int_{\R} V(x)_+^2 \dd x.
		    \end{split}
		\end{equation}
		By a standard variational argument we can extend the bound to a more general class of $V$ where only the positive part of $V$ appears on the right-hand side. This concludes the proof for $\gamma=3/2$. To extend the bound to $\gamma\geq 3/2$ we use the argument of M.Aizenman and E.Lieb, which relies on the identity
		\begin{equation}\label{eqn:AizenmanIdentity}
		    \lambda_{-}^\gamma=C_{\gamma,\delta}\int_0^\infty \kappa^{\gamma-\delta-1}\left(\lambda+\kappa\right)^\delta_{-}\dd\kappa
		\end{equation}
		which holds for $\gamma\geq \delta$, where $C_{\gamma,\delta}$ is some positive constant related to the beta function, see \cite{aizenman1978semi}. Applying the identity together with the fact that $\sqrt{a+b}\leq \sqrt{a}+\sqrt{b}$ for $a,b\geq 0$, we find that for any $\gamma\geq 3/2$
		\begin{align*}
		   \left[\sqrt{|E_k(H_V)|}-\sqrt{|E_1(H_0)|} \right]_{+}^{2\gamma}&=C_{2\gamma,3}\int_0^\infty \kappa^{2\gamma-4}\left(\kappa+\sqrt{|E_1(H_0)|}-\sqrt{|E_k(H_V)|}\right)^3_{-}\dd \kappa\\
		    &\leq C_{2\gamma,3}\int_0^\infty \kappa^{2\gamma-4}\left(\sqrt{|E_1(H_0)|+\kappa^2}-\sqrt{|E_k(H_V)|}\right)^3_{-}\dd \kappa
		 \end{align*}
		 after summing over $k$ and using \eqref{eqn:proofthm1} we find that 
		 \begin{align*}
		    \sum_{k} \left[\sqrt{|E_k(H_V)|}-\sqrt{|E_1(H_0)|} \right]_{+}^{2\gamma}&\leq \frac{3C_{2\gamma,3}}{16}\int_0^\infty \kappa^{2\gamma-4}\int_{-\infty}^\infty (V(x)-\kappa^2)_{+}^2\dd x\dd \kappa\\
		    &=\frac{3C_{2\gamma,3}}{32}\int_{-\infty}^\infty\int_0^\infty \kappa^{\gamma-\frac{1}{2}-2}(-V(x)+\kappa)_{-}^2\dd \kappa\dd x\\
		    &=\frac{3C_{2\gamma,3}}{32C_{\gamma+1/2,2}}\int_{-\infty}^\infty V(x)_{+}^{\gamma+\frac{1}{2}}\dd x
		\end{align*}
		where we have used a change of variables in $\kappa$ and applied the identity \eqref{eqn:AizenmanIdentity} once more. This concludes the proof of Theorem \ref{thm:firsteigen}. The following remark explains how the result follows for the Robin operator on the half-line. 
		\begin{remark}
		On the half-line for the operator $H_{0,\sigma}$ with $\sigma\in \R$, an analogous version of Lemma \ref{lemsingle} holds for non-increasing potentials by near-identical proof. In this case the asymptotic requirement for \eqref{ineqrobin} at infinity is satisfied when $V_0\in L^1(\R_{+})$, which follows by work in \cite{demirel2011trace}. From Remark \ref{rem:robinrem}, the bound in Theorem \ref{thm:Robincase} for $\gamma=3/2$ follows by the same variational idea as above, where at each step we also use that $H_{V,\sigma}\leq H_{V,\infty}$. The lifting to $\gamma\geq 3/2$ follows an identical argument.
		\end{remark}
	\section{Proof of theorems \ref{thm:soliton} and \ref{thm:Coulomb}}\label{section:polcou}
		Finally, we apply our result from Section 2 to two well-known examples of $H_0$. For the P\"oschl-Teller potential, this will be a direct result of the explicit factorisation and application of Proposition \ref{prop:technical}. For the Coulomb potential we will need to be careful about the Dirichlet boundary conditions. In both cases we will see that we can obtain superior estimates to those in theorems \ref{thm:firsteigen} and \ref{thm:Robincase}.
		\subsection{P\"oschl-Teller Potential}
			Consider the case of the potential
			\begin{equation*}
				V_0(x)=\nu(\nu+1)\sech^2(x)
			\end{equation*}
			with $\nu>0$. It has an explicit factorisation according to the first order operator $D=\frac{\mathrm{d}}{\mathrm{d}x}+\nu \tanh(x)$. Following the factorisation method (outlined in Section \ref{Section:facscheme}), its lifted potential is given by
			\begin{equation*}
				V_0^{(2)}(x) = \nu (\nu-1)\sech^2(x),
			\end{equation*}
			thus we can see that $H_0^{(2)}$ has the same shaped potential as $H_0$, with a change of parameter. It follows that 
			\begin{equation*}
				V_0^{(k)}(x) = (\nu-k+1)(\nu-k+2)\sech^2(x).
			\end{equation*}
			Using this scheme we can explicitly compute the negative eigenvalues to be $E_k(H_0)=-(\nu-k+1)^2$, $k=1,\ldots,\lceil \nu \rceil$ as well as the corresponding eigenfunctions, see \cite{frankLaptevbook}. Each lifted operator has  ground state given by $u_k=\cosh^{-\nu+k}(x)$ with log-derivative given by $g_k=-(\nu-k+1)\tanh(x)$. Thus $V_0$ satisfies the requirements of Proposition \ref{prop:technical} and at each stage its lifted ground state is log-concave. If $V\in C_0^\infty(\R)$ then $V$ also satisfies the conditions \eqref{asymptotics}. Thus, we conclude that
			\begin{equation*}
				\sum_{k=1}^{\lceil\nu\rceil}\left(\sqrt{|E_k(H_V)|}-(v-k+1)\right)^3+\sum_{k=\lceil\nu\rceil+1}|E_k(H_V)|^\frac{3}{2} \leq \frac{3}{16}\int V(x)^2_{+}\dd x.
			\end{equation*}
			Where the standard approximation and variational arguments extend this to more general $V$. The additional terms on the left-hand side comes from removing $V_0^{(\lceil \nu\rceil)}\leq 0$ after the $\lceil\nu\rceil$ step and following Remark \ref{eqn:remarkFSW}.  
			
			The second bound \eqref{eqn:polsch2nd} is obtained from the full statement of the proposition by removing the difference terms and applying the standard lifting argument of Aizenman-Lieb \cite{aizenman1978semi} to $\gamma\geq 3/2$. 
	\subsection{Coulomb potential}
		Now consider the case of the operator with Coulomb potential on the positive real line with Dirichlet boundary condition at $0$. That is, 
		\begin{equation*}
			H_{0,\infty}=-\frac{\mathrm{d}^2}{\mathrm{d}r^2}+\frac{\nu(\nu+1)}{r^2}-\frac{\kappa}{r} \text{ in } L^2(\R_{+}), 
		\end{equation*}
		where $\nu> -1/2$ and $\kappa>0$. The explicit factorisation is given according to the first order operator $D=\frac{\mathrm{d}}{\mathrm{d}r}-\frac{(v+1)}{r}+\frac{\kappa}{2(v+1)}$, from which the lifted potential is found to be 
		\begin{equation*}
		        V^{(2)}_0(r)=-\frac{(\nu+1)(\nu+2)}{r^2}+\frac{\kappa}{r}.
		\end{equation*}
		We see that $H_{0,\infty}^{(2)}$ has the same shaped potential with different parameters. Iterating this, we find that 
		\begin{equation*}
		        V^{(k)}_0(r)=-\frac{(\nu+k-1)(\nu+k)}{r^2}+\frac{\kappa}{r}.
		\end{equation*}
		This scheme can be used to explicitly calculate the negative eigenvalues as $E_k(H_{0,\infty})=-\kappa^2/4(\nu+k)^2$, $k\in \mathbb{N}$ and the associated eigenfunctions, see \cite{frankLaptevbook}. The lifted ground states are given by $u_k(r)=r^{\nu+1}e^{-r\sqrt{|E_k(H_{0,\infty})|}}$, these are log-concave with $g_k^\prime =-(\nu+k)/r^2$. Though these $u_k$ do not obey the asymptotic requirements of Remark \ref{rem:robinrem} we still have that $g_k\rightarrow -\sqrt{|E_k(H_{0,\infty})|}$ at infinity. However, we cannot directly apply the ideas of the Robin case since the value of $g_k$ at $0$ is potentially problematic. We treat this with care, following \cite{laptev2021factorisation}.
		
	     We revert to the scheme in Section \ref{Section:facscheme} and denote by  $\lambda_k=-E_k(H_{V,\infty})$. Assume that $V\in C_0^\infty(\R_{+})$ and consider the behaviour of the ground state of $H_{V,\infty}$, $\psi_1$, outside of the support of $V$. Then $\psi_1$ satisfies
		\begin{equation*}
			-\psi_1^{\prime\prime}(r)+\left(\frac{v(v+1)}{r^2}-\frac{\kappa}{r}\right) \psi_1(r)=-\lambda_1 \psi_1(r).
		\end{equation*}
		The solution can be found in terms of the Whittaker functions. Using the asymptotic expansion of these it follows that $f_1$ (defined as in Section \ref{Section:facscheme}) satisfies 
		\begin{equation*}
			f_1(r)=\begin{cases}
				\frac{r \left(-\kappa+2 \sqrt{\lambda_1} \nu+2 \sqrt{\lambda_1}\right) \left(\kappa+2 \sqrt{\lambda_1} \nu+2 \sqrt{\lambda_1}\right)}{4 (\nu+1)^2 (2 \nu+3)}+\mathcal{O}\left(r^2\right), \ &\text{ as }r\rightarrow 0\\
				-\sqrt{\lambda_1}+\frac{\kappa}{2 r\sqrt{\lambda_1} }+\mathcal{O}\left(r^{-2}\right), &\text{ as } r\rightarrow \infty.
			\end{cases}
		\end{equation*}
		We use this in the inductive method of Section \ref{Section:facscheme} with $g_1^\prime<0$. Together with the fact that $H_{0,\infty}$ lifts to another Coulomb operator we can check asymptotic requirements are met for all lifted potentials. Thus the following inequality holds,
		\begin{equation*}
			\sum_{k=1}\left(\sqrt{|E_k(H_V)|}-\frac{\kappa}{2(\nu+k)}\right)^3\leq \frac{3}{8}\int_{\R_{+}} V(r)^2 \dd r
		\end{equation*}
		which extends to a larger class of $V$ as before.
	\begin{remark}
		Both examples in this section are known as shape-invariant potentials; that is, at each step of factorisation the new potentials have the same shape but differ by some parameter. Such potentials are fully factorisable, which in these cases allows us to apply the full version of Proposition \ref{prop:technical}. See \cite{gangopadhyaya2008generating} for further examples of shape-invariant potentials. 
 	\end{remark}

	\subsection*{Acknowledgments} The author would like to thank Lukas Schimmer for useful discussions and Ari Laptev for his supervision and insight.
		
		\bibliographystyle{alpha-mod}
		\bibliography{pertbib.bib}
	\end{document}